\newcommand{\R}{\mathbb R} 
\DeclarePairedDelimiter{\norm}{\lVert}{\rVert} 
\DeclarePairedDelimiter{\abs}{\lvert}{\rvert} 
\newtheorem{theorem}{Theorem}
\newtheorem{proposition}{Proposition}
\newtheorem{lemma}{Lemma}
\newtheorem{corollary}{Corollary}
\newtheorem{remark}{Remark}
\newtheorem{definition}{Definition}
\begin{document}
\pagestyle{empty}

\vskip 2cm \begin{center}{\LARGE On global minimizers of quadratic functions with cubic regularization}\end{center}
\par\bigskip
\centerline{\large Andrea Cristofari$^1$, Tayebeh Dehghan Niri$^2$, Stefano Lucidi$^3$}
\par\bigskip\bigskip

\centerline{$^1$Department of Mathematics} \centerline{University of Padua} \centerline{Via Trieste, 63, 35121 Padua, Italy}
\par\medskip
\centerline{$^2$Department of Mathematics} \centerline{Yazd University} \centerline{P.O. Box 89195-74, Yazd, Iran}
\par\medskip
\centerline{$^3$Department of Computer, Control and Management Engineering} \centerline{Sapienza University of Rome} \centerline{Via Ariosto, 25, 00185 Rome, Italy}
\par\medskip
\centerline{e-mail (Cristofari): andrea.cristofari@unipd.it} \centerline{e-mail (Dehghan Niri): t.dehghan@stu.yazd.ac.ir} \centerline{e-mail (Lucidi): lucidi@diag.uniroma1.it}

\par\bigskip\noindent \centerline{\bf Abstract}
In this paper, we analyze some theoretical properties of the problem of minimizing a quadratic function with a cubic regularization term,
arising in many methods for unconstrained and constrained optimization that have been proposed in the last years.
First we show that, given any stationary point that is not a global solution,
it is possible to compute, in closed form, a new point with a smaller objective function value.
Then, we prove that a global minimizer can be obtained by computing a finite number of stationary points.
Finally, we extend these results to the case where stationary conditions are approximately satisfied, discussing some possible algorithmic applications.

\bigskip\par\noindent
\textbf{Keywords.} Unconstrained optimization. Cubic regularization. Global minima
\bigskip\par\noindent

\pagestyle{plain} \setcounter{page}{1}

\section{Introduction}\label{sec:introduction}
In this paper, we address the solutions of the following (possibly non-convex) optimization problem:
\begin{equation}\label{cubic_model}
\min_{s \in \R^n}\ m(s) := c^T s + \frac 1 2 s^T Q s + \frac 1 3 \sigma \norm s^3,
\end{equation}
where $c \in \R^n$, $Q$ is a symmetric $n \times n$ matrix, $\sigma$ is a positive real number and, here and in the rest of the article, $\norm{\cdot}$ is the Euclidean norm.

In recent years, there has been a growing interest in studying the properties of problem~\eqref{cubic_model}, since functions of the form of $m(s)$
are used as local models (to be minimized) in many algorithmic frameworks for unconstrained optimization
~\cite{griewank:1981,nesterov:2006a,weiser:2007,nesterov:2008,cartis:2011a,cartis:2011b,gould:2012,bellavia:2014,benson:2014,bianconcini:2015,dussault:2015,bianconcini:2016,birgin:2017},
which have been even extended to the constrained case~\cite{nesterov:2006b,cartis:2012,benson:2014}.
To be more specific, let us consider the unconstrained optimization problem
\[
\min_{x \in \R^n}\  f(x),
\]
where $f \colon \R^n \to \R$ is a twice continuously differentiable function.
The class of methods proposed in the above cited papers is mostly characterized by the iteration $x^{k+1} = x^k + s^k$,
being $s^k$ a (possibly approximate) minimizer of the cubic model
\[
m^k(s) := f(x^k) + \nabla f(x^k)^Ts + \frac 1 2 s^T \nabla^2 f(x^k) s + \frac 1 3 \sigma^k \norm s^3,
\]
where $\sigma^k$ is a suitably chosen positive real number.
Interestingly, it can be shown that, under suitable assumptions, this algorithmic scheme is able to achieve quadratic convergence rate and
a worst-case iteration complexity better than the gradient method.
In particular, if $\nabla^2 f(x)$ is Lipschitz continuous and $s^k$ is a global minimizer of $m^k(s)$,
Nesterov and Polyak~\cite{nesterov:2006a} proved a worst-case iteration count of order $O(\epsilon^{-3/2})$ to obtain $\norm{\nabla f(x^k)} \le \epsilon$.
Cartis, Gould and Toint~\cite{cartis:2011a,cartis:2011b} generalized this result,
obtaining the same complexity bound, but allowing for a symmetric approximation of $\nabla^2 f(x^k)$ to be used in $m^k(s)$ and
relaxing the condition that $s^k$ is a global minimizer of $m^k(s)$.
Moreover, superlinear and quadratic convergence rate were proved under appropriate assumptions, but without
requiring $\nabla^2 f(x^k)$ to be globally Lipschitz continuous.

The intuition behind the algorithm proposed by Cartis, Gould and Toint is that the parameter $\sigma^k$ plays the same role as the (reciprocal of the)
trust-region radius in trust-region methods.
Moreover, some theoretical properties of trust-region models can be extended to~\eqref{cubic_model},
such as the existence of necessary and sufficient conditions for global minimizers even when $m(s)$ is non-convex~\cite{griewank:1981,nesterov:2006a,cartis:2011a}.
In this fashion, Cartis, Gould and Toint proposed the Adaptive Regularization algorithm using Cubics (ARC) that,
besides having the theoretical convergence properties mentioned above, is in practice comparable with state-of-the-art trust-region methods.

In this respect, in the above cited papers different strategies were proposed to minimize $m^k(s)$.
In particular, in~\cite{nesterov:2006a,cartis:2011a} some iterative techniques were devised to compute global minimizers,
that are based on solving a one-dimensional non-linear equation.

Starting from these considerations, here we focus on the solutions of problem~\eqref{cubic_model}, pointing out some theoretical properties
that, besides their own interest, may be useful from an algorithmic point of view.
In particular, we first extend the results obtained in~\cite{lucidi:1998} for trust-region models and we show that,
given any stationary point of~\eqref{cubic_model} that is not a global minimizer, we can compute, in closed form, a new point that reduces $m(s)$.
So, a global minimizer of~\eqref{cubic_model} can be obtained by repeating this step a finite number of times,
that is, computing at most $2(k+1)$ stationary points, where $k$ is the number of distinct negative eigenvalues of the matrix $Q$.
Further, we show how this strategy can be generalized to the case where stationary conditions are approximately satisfied,
opening to a possible practical usage of the proposed results.

The rest of the paper is organized as follows.
Section~\ref{sec:properties} is the core of the paper, where we point out some theoretical properties of the stationary points of~\eqref{cubic_model} and
analyze how to compute global minima by escaping from stationary points that are not global minimizers.
In Section~\ref{sec:practical_usage} we generalize these properties, considering approximate stationary points, and we
briefly discuss how these results can used in a more general framework.
Finally, we draw some conclusions in Section~\ref{sec:conclusions}.

\section{Properties of stationary points}\label{sec:properties}
In this section, we present the main results of the paper.
First, let us report the definition of stationary points of problem~\eqref{cubic_model} and
recall a known result on necessary and sufficient conditions for global optimality, whose proof can be found in~\cite{cartis:2011a}.
From now on, we indicate with $I$ the $n \times n$ identity matrix.
\begin{definition}
We say that $s^* \in \R^n$ is a stationary point of problem~\eqref{cubic_model} if
\[
\nabla m(s^*) = c + Qs^* + \sigma \norm{s^*} s^* = 0,
\]
or equivalently,
\begin{align}
& c + Qs^* + \lambda s^* = 0, \label{stat1} \\
& \lambda = \sigma\|s^*\|. \label{stat2}
\end{align}
\end{definition}

\begin{theorem}\label{th:opt}
A point $s^* \in \R^n$ is a global minimizer of problem~\eqref{cubic_model} if and only if
it satisfies stationary conditions~\eqref{stat1}--\eqref{stat2} and the matrix $(Q + \sigma \norm {s^*} I)$ is positive semidefinite.
Moreover, $s^*$ is unique if $(Q + \sigma \norm {s^*} I)$ is positive definite.
\end{theorem}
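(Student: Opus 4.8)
\emph{Plan.} The plan is to reduce the statement to the elementary optimization of a convex quadratic by absorbing the cubic term into the Hessian. I would set $\lambda^* := \sigma\norm{s^*}$ and $H := Q + \lambda^* I$, and introduce the auxiliary quadratic $\hat m(s) := c^Ts + \frac12 s^TQs + \frac12\lambda^*\norm{s}^2$, so that $m(s) = \hat m(s) + \phi(\norm{s})$ with $\phi(t) := \frac13\sigma t^3 - \frac12\lambda^* t^2$; note $\nabla\hat m(s) = c + Qs + \lambda^* s$ and $\nabla^2\hat m\equiv H$. The single elementary fact driving everything is that, since $\phi'(t) = \sigma t\,(t-\norm{s^*})$, the function $\phi$ decreases on $[0,\norm{s^*}]$ and increases on $[\norm{s^*},\infty)$, hence attains its minimum over $t\ge 0$ exactly at $t=\norm{s^*}$; consequently $\phi(\norm{s})\ge\phi(\norm{s^*})$ for every $s\in\R^n$, with equality if and only if $\norm{s}=\norm{s^*}$.

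\emph{Sufficiency.} Assuming \eqref{stat1}--\eqref{stat2} and that $H$ is positive semidefinite, \eqref{stat1} reads exactly $\nabla\hat m(s^*)=0$, so convexity of $\hat m$ makes $s^*$ a global minimizer of $\hat m$; adding the inequality $\hat m(s)\ge\hat m(s^*)$ to the one for $\phi$ above yields $m(s)\ge m(s^*)$ for all $s$.

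\emph{Necessity.} Let $s^*$ be a global minimizer. Since $\norm{\cdot}^3$ is continuously differentiable with gradient $3\norm{s}\,s$, the first-order condition $\nabla m(s^*)=0$ holds, which is \eqref{stat1}--\eqref{stat2}. It remains to prove $H$ positive semidefinite. If $s^*=0$, then \eqref{stat1} forces $c=0$, so $m(s)=\frac12 s^TQs+\frac13\sigma\norm{s}^3$, and a negative eigenvalue of $Q$ with unit eigenvector $u$ would give $m(tu)<0=m(0)$ for small $t>0$, a contradiction; hence $Q=H$ is positive semidefinite. If $s^*\ne 0$, I would restrict to the sphere $\norm{s}=\norm{s^*}$, where $\phi(\norm{s})=\phi(\norm{s^*})$, so that $m(s)-m(s^*)=\hat m(s)-\hat m(s^*)=\frac12(s-s^*)^TH(s-s^*)$ (exact Taylor expansion of the quadratic $\hat m$ at $s^*$, using $\nabla\hat m(s^*)=0$); global optimality then gives $(s-s^*)^TH(s-s^*)\ge 0$ whenever $\norm{s}=\norm{s^*}$. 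The displacements $d=s-s^*$ arising this way are precisely those with $\norm{d}^2+2d^Ts^*=0$, so for every unit vector $\hat d$ with $\hat d^Ts^*<0$ the choice $d=-2(\hat d^Ts^*)\hat d$ shows $\hat d^TH\hat d\ge 0$; a continuity argument (tilting $\hat d$ slightly toward $-s^*$) extends this to $\hat d^Ts^*\le 0$, and using $\hat d^TH\hat d=(-\hat d)^TH(-\hat d)$ to all directions, whence $H$ is positive semidefinite.

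\emph{Uniqueness, and the main obstacle.} If $H$ is moreover positive definite, then $\hat m$ is strictly convex with unique minimizer $s^*$; for any global minimizer $\bar s$ one has $0=m(\bar s)-m(s^*)=\big[\hat m(\bar s)-\hat m(s^*)\big]+\big[\phi(\norm{\bar s})-\phi(\norm{s^*})\big]$ with both brackets nonnegative, so each vanishes and $\hat m(\bar s)=\hat m(s^*)$ forces $\bar s=s^*$. The one genuinely delicate step is the last part of the necessity argument, namely upgrading optimality \emph{on the sphere} $\norm{s}=\norm{s^*}$ to positive semidefiniteness of $H$ \emph{on all of} $\R^n$; this hinges on the geometric observation that the vectors $s-s^*$ with $\norm{s}=\norm{s^*}$ realize every direction up to sign and positive scaling, together with the continuity argument covering the directions orthogonal to $s^*$.
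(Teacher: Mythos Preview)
The paper does not give its own proof of this theorem; it simply states that the proof ``can be found in~\cite{cartis:2011a}''. So there is no in-paper argument to compare against, and the relevant question is whether your proposal stands on its own.

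It does. The decomposition $m(s)=\hat m(s)+\phi(\norm s)$ with $\hat m$ the convex quadratic having Hessian $H=Q+\sigma\norm{s^*}I$ and $\phi(t)=\tfrac13\sigma t^3-\tfrac12\sigma\norm{s^*}t^2$ is exactly the right tool: $\hat m$ and $m$ share the same gradient at $s^*$, so stationarity transfers, and $\phi$ is minimized precisely at $t=\norm{s^*}$, which makes the sufficiency direction immediate. For necessity, restricting to the sphere $\norm s=\norm{s^*}$ kills the $\phi$ term and reduces the question to nonnegativity of the quadratic form $\tfrac12(s-s^*)^TH(s-s^*)$ on that sphere; your observation that the chords $s-s^*$ realize every direction (up to sign) with $\hat d^Ts^*<0$, plus continuity for the tangent directions and the evenness of the quadratic form for the remaining half-space, cleanly yields positive semidefiniteness of $H$. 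The uniqueness argument is also correct: both summands in $m(\bar s)-m(s^*)$ are nonnegative, so each must vanish, and strict convexity of $\hat m$ forces $\bar s=s^*$.

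For context, the argument in Cartis--Gould--Toint proceeds somewhat differently, expanding $m(s)-m(s^*)$ directly via the stationarity relation $(Q+\lambda^*I)s^*=-c$ and controlling the residual cubic terms by hand; your splitting into ``quadratic part'' plus ``radial part'' is arguably more transparent, since it isolates the two inequalities (convexity of $\hat m$, minimality of $\phi$ at $\norm{s^*}$) that together do all the work. The only place to be careful is the continuity step for directions orthogonal to $s^*$, which you flagged yourself; it is genuinely needed but entirely routine.
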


Now, exploiting the close relation between problem~\eqref{cubic_model} and the trust-region model (see~\cite{conn:2000} for an overview on trust-region methods),
we extend the results obtained in~\cite{lucidi:1998} to show that
\begin{enumerate}[(i)]
\item given a stationary point $\bar s$ of~\eqref{cubic_model} that is not a global minimizer,
    we can compute, in closed form, a new point $\hat s$ such that $m(\hat s) < m(\bar s)$;
\item a global minimizer of~\eqref{cubic_model} can be obtained by computing at most \mbox{$2(k+1)$} stationary points,
    where $k$ is the number of distinct negative eigenvalues of the matrix $Q$.
\end{enumerate}

We start by proving the first point, as stated in the following theorem.
\begin{theorem}\label{th:global}
Let $\bar s$ be a stationary point of problem~\eqref{cubic_model}. We define the point $\hat s$ as follows:
\begin{enumerate}[(a)]
\item if $c^T \bar s > 0$, then
    \[
    \hat s := -\bar s;
    \]
\item if $c^T \bar s \le 0$ and a vector $d \in \R^n$ exists such that $d^T(Q + \sigma \norm{\bar s} I) d < 0$,
    \begin{enumerate}[(i)]
    \item if $\bar s = 0$, then
        \[
        \hat s := \bar s + \alpha d,
        \]
        with
        \[
        0 < \alpha < -\frac{3 \, d^T Q d}{2 \, \sigma \norm d^3};
        \]
    \item if $\bar s \ne 0$ and $\bar s^T d \ne 0$, then
        \[
        \hat s := \bar s - 2 \frac{\bar s^T d}{\norm d^2} d;
        \]
    \item if $\bar s \ne 0$ and $\bar s^T d = 0$, then
        \[
        \hat s := \bar s - 2 \frac{\bar s^T z}{\norm z^2} z,
        \]
        where $z := \bar s + \alpha d$ and
        \[
        \alpha > \frac{c^T d - \sqrt{(c^T d)^2 + (c^T \bar s) \bigl[d^T (Q + \sigma \norm{\bar s} I) d\bigr]}} {d^T (Q + \sigma \norm{\bar s} I) d}.
        \]
    \end{enumerate}
\end{enumerate}
We have that
\[
m(\hat s) < m(\bar s).
\]
\end{theorem}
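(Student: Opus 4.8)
The plan is to verify, in each of the cases (a), (b)(i), (b)(ii), (b)(iii), that the stated choice of $\hat s$ satisfies $m(\hat s) < m(\bar s)$, by direct substitution into the definition of $m$ and exploiting the stationary conditions~\eqref{stat1}--\eqref{stat2}. The unifying idea is that a stationary point that is not a global minimizer must fail one of the two sufficient conditions of Theorem~\ref{th:opt}: either $\bar s$ is a stationary point but $(Q+\sigma\norm{\bar s}I)$ is not positive semidefinite (this is case (b)), or — as I will argue at the outset — if $(Q+\sigma\norm{\bar s}I)$ \emph{is} positive semidefinite then $\bar s$ would already be a global minimizer, so the remaining situation is $c^T\bar s>0$, which is case (a). So the first step is to observe that cases (a) and (b) together exhaust all non-global stationary points.

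Next I would handle case (a). Since $\bar s$ is stationary, pairing~\eqref{stat1} with $\bar s$ gives $c^T\bar s + \bar s^T Q\bar s + \sigma\norm{\bar s}^3 = 0$, and $m(\bar s) = c^T\bar s + \tfrac12\bar s^TQ\bar s + \tfrac13\sigma\norm{\bar s}^3$ can be rewritten using this identity. Then $m(-\bar s) - m(\bar s) = -2c^T\bar s < 0$ follows by a short computation (the quadratic and cubic terms are even in $\bar s$, so only the linear term changes sign). For case (b), the common structure of subcases (ii) and (iii) is that $\hat s$ is constructed to have the \emph{same norm} as $\bar s$: in (ii), reflecting $\bar s$ across the hyperplane orthogonal to $d$ (a Householder-type reflection) preserves $\norm{\bar s}$, and in (iii) the same reflection is applied using the vector $z$, which is chosen so that $\norm{\hat s}=\norm{z-\text{(projection)}}$... more precisely $\hat s$ is the reflection of $\bar s$ in the direction $z$. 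When $\norm{\hat s}=\norm{\bar s}$, the cubic terms in $m(\hat s)$ and $m(\bar s)$ cancel, and one is left comparing $c^T\hat s + \tfrac12\hat s^TQ\hat s$ with $c^T\bar s + \tfrac12\bar s^TQ\bar s$; using~\eqref{stat1} to eliminate $c$ in favour of $-(Q+\lambda I)\bar s$, this difference reduces to a negative multiple of $d^T(Q+\sigma\norm{\bar s}I)d$ (which is negative by hypothesis), possibly after checking that the scalar coefficient has the right sign — in (iii) this is exactly what the lower bound on $\alpha$, coming from the quadratic formula, guarantees (it forces $c^Tz$ and the relevant inner products to cooperate, using $c^T\bar s\le 0$).

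Subcase (b)(i), where $\bar s=0$, is different in flavour: here $m(\bar s)=m(0)=0$, and $m(\hat s)=m(\alpha d)=\alpha c^Td + \tfrac12\alpha^2 d^TQd + \tfrac13\sigma\alpha^3\norm d^3$. Since $\bar s=0$ the hypothesis $d^T(Q+\sigma\norm{\bar s}I)d<0$ just says $d^TQd<0$, and stationarity at $0$ forces $c=0$, so $m(\alpha d) = \tfrac12\alpha^2 d^TQd + \tfrac13\sigma\alpha^3\norm d^3 = \tfrac12\alpha^2(d^TQd + \tfrac23\sigma\alpha\norm d^3)$; this is negative precisely when $0<\alpha<-\tfrac{3d^TQd}{2\sigma\norm d^3}$, which is the stated range. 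The main obstacle I anticipate is subcase (b)(iii): the algebra of substituting $z=\bar s+\alpha d$ into the Householder reflection, expanding $m(\hat s)-m(\bar s)$, and then showing the resulting expression in $\alpha$ is negative exactly on the interval determined by the quadratic formula requires care — one must track the sign of $\bar s^Tz=\norm{\bar s}^2+\alpha\bar s^Td=\norm{\bar s}^2$ (using $\bar s^Td=0$), and verify that the discriminant $(c^Td)^2+(c^T\bar s)[d^T(Q+\sigma\norm{\bar s}I)d]$ is nonnegative so that the square root is real (this uses $c^T\bar s\le 0$ and $d^T(Q+\sigma\norm{\bar s}I)d<0$, making the product nonnegative). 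Everything else is routine expansion combined with the two stationarity identities.
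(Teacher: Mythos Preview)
Your plan is correct and follows essentially the same route as the paper: case~(a) via evenness of the quadratic and cubic terms, case~(b)(i) via $c=0$ and direct evaluation of $m(\alpha d)$, and cases~(b)(ii)--(iii) via the norm-preserving reflection and the negative curvature of $Q+\sigma\norm{\bar s}I$. One small streamlining you may have missed for~(b)(iii): rather than expanding $m(\hat s)-m(\bar s)$ directly as a function of~$\alpha$, the paper first shows that the stated lower bound on~$\alpha$ forces $z^T(Q+\sigma\norm{\bar s}I)z<0$ (this is the quadratic in~$\alpha$ whose larger root is given by the formula, using stationarity to write $(Q+\sigma\norm{\bar s}I)\bar s=-c$), and then simply invokes case~(b)(ii) with $z$ in place of~$d$, noting $\bar s^Tz=\norm{\bar s}^2\ne 0$; your preliminary discussion about (a) and (b) exhausting non-global stationary points is not needed for the theorem itself (it is the content of Remark~\ref{rem:global}).
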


\begin{proof}
In case~(a), we can write
\[
\begin{split}
m(\hat s) & = m(-\bar s) = c^T (-\bar s) + \frac 1 2 \bar s^T Q \bar s + \frac 1 3 \sigma \norm{\bar s}^3 \\
          & < c^T \bar s + \frac 1 2 \bar s^T Q \bar s + \frac 1 3 \sigma \norm{\bar s}^3 = m(\bar s).
\end{split}
\]
Now, we consider case~(b) and distinguish the three subcases.
\begin{enumerate}[(i)]
\item From~\eqref{stat1}--\eqref{stat2}, we have that $c=0$. Thus, we can write
    \[
    m(\bar s + \alpha d) = m(\alpha d) = \frac 1 2 \alpha^2 d^T Q d + \frac 1 3 \sigma \alpha^3 \norm d^3, \quad \forall \alpha \in \R^n.
    \]
    Consequently,
    \[
    m(\bar s + \alpha d) = \frac 1 6 \alpha^2 ( 3 d^T Q d + 2 \sigma \alpha \norm d^3 ) < 0 = m(\bar s),
    \]
    for all $\displaystyle{0 < \alpha < -\frac{3 \, d^T Q d}{2 \, \sigma \norm d^3}}$.
\item First, we observe that
    \begin{equation}\label{proof:norm_hat_x}
    \biggl\| \bar s - 2 \frac{\bar s^T d}{\norm d^2} d \biggr\|^2
    = \norm{\bar s}^2 + \biggl(2 \frac {\bar s^T d}{\norm d^2} \biggr)^2 \norm d^2 - 4 \frac{\bar s^T d}{\norm d^2} (\bar s^T d) =  \norm{\bar s}^2.
    \end{equation}
    Moreover, the function $m(s)$ can be written as
    \begin{equation}\label{proof:m(x)}
    m(s) = c^T s + \frac 1 2 s^T (Q + \sigma \norm s I) s - \frac 1 6 \sigma \norm s^3.
    \end{equation}
    Using~\eqref{proof:norm_hat_x} and~\eqref{proof:m(x)}, we can write $\displaystyle{m\biggl(\bar s - 2 \frac{\bar s^T d}{\norm d^2} d\biggr)}$ as
    \[
    c^T \biggl(\bar s - 2 \frac{\bar s^T d}{\norm d^2} d\biggr) +
    \frac 1 2 \biggl(\bar s - 2 \frac{\bar s^T d}{\norm d^2} d\biggr)^T
    (Q + \sigma \| \bar s \| I) \biggl(\bar s - 2 \frac{\bar s^T d}{\norm d^2} d\biggr)
    - \frac 1 6 \sigma \norm{\bar s}^3.
    \]
    Rearranging and taking into account that $\nabla m(\bar s) = Q \bar s + \sigma \norm{\bar s} \bar s + c$, we obtain
    \begin{equation}\label{proof:case_b_2}
    m\biggl(\bar s - 2 \frac{\bar s^T d}{\norm d^2} d\biggr) =
    m(\bar s) + \frac 1 2 \biggl( 2 \frac{\bar s^T d}{\norm d^2} \biggr)^2 d^T (Q + \sigma \norm{\bar s} I) d
    - 2 \frac{\bar s^T d}{\norm d^2} \nabla m(\bar s)^T d.
    \end{equation}
    Stationary conditions~\eqref{stat1}--\eqref{stat2} imply that $\nabla m(\bar s) = 0$.
    Exploiting the fact that $d^T(Q + \sigma \norm{\bar s} I) d < 0$, we get
    $\displaystyle{m\biggl(\bar s - 2 \frac{\bar s^T d}{\norm d^2} d\biggr)} < m(\bar s)$.
\item Using the definition of $z$, we can write
    \[
    \begin{split}
    z^T (Q + \sigma \norm{\bar s} I) z & = (\bar s + \alpha d)^T (Q + \sigma \norm{\bar s} I) (\bar s + \alpha d) \\
                                       & = \bar s^T (Q + \sigma \norm{\bar s} I) \bar s + \alpha^2 d^T (Q + \sigma \norm{\bar s} I) d
                                           + 2 \alpha d^T (Q + \sigma \norm{\bar s} I) \bar s.
    \end{split}
    \]
    From stationary conditions~\eqref{stat1}--\eqref{stat2}, we have that $Q \bar s + \sigma \norm{\bar s} \bar s = -c$.
    So, we obtain
    \[
    z^T (Q + \sigma \norm{\bar s} I) z = \alpha^2 d^T (Q + \sigma \norm{\bar s} I) d - 2 \alpha c^T d - c^T \bar s.
    \]
    It is straightforward to verify that the right-hand side of the above equality is negative for all $\alpha > \tilde \alpha$, where
    \[
    \tilde \alpha = \frac{c^T d - \sqrt{(c^T d)^2 + (c^T \bar s) \bigl[d^T (Q + \sigma \norm{\bar s} I) d\bigr]}} {d^T (Q + \sigma \norm{\bar s} I) d}.
    \]
    Consequently, since $z = \bar s + \alpha d$ with $\alpha > \tilde \alpha$, it follows that $z^T (Q + \sigma \norm{\bar s} I) z < 0$.
    We can thus proceed as in case~(ii) by defining the point $\hat s = \bar s - 2 \dfrac{\bar s^T z}{\norm z^2} z$ and we get the result.
\end{enumerate}
\end{proof}

\begin{remark}\label{rem:global}
Conditions of Theorem~\ref{th:global} are satisfied if and only if the stationary point $\bar s$ is not a global minimizer.
It follows from the fact that, if (a) or (b) hold at $\bar s$, then $\bar s$ is not a global minimizer; vice versa,
if $\bar s$ is not a global minimizer, then $(Q + \sigma \norm {\bar s} I)$ is not positive semidefinite (see Theorem~\ref{th:opt})  and then (b) holds.
\end{remark}

Now, we show how the above result can be exploited to obtain a global minimizer of~\eqref{cubic_model} by computing a finite number of stationary points.
We first need the following lemma, stating that two stationary points of problem~\eqref{cubic_model} with the same norm produce the same objective value.
\begin{lemma}\label{lemma:stat}
Let $\hat s$ and $\bar s$ be two points satisfying stationary conditions~\eqref{stat1}--\eqref{stat2} with the same $\lambda$. Then,
\[
m(\hat s) = m(\bar s).
\]
\end{lemma}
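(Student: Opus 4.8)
The plan is to reduce everything to two observations: that the two stationary points have the same norm, and that on any stationary point the objective collapses to a simple expression in $c^T s$ and $\norm s$.

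First I would use~\eqref{stat2}: since $\lambda = \sigma \norm{\hat s} = \sigma \norm{\bar s}$ and $\sigma > 0$, the two points satisfy $\norm{\hat s} = \norm{\bar s}$. Next I would invoke the rewriting~\eqref{proof:m(x)}, namely $m(s) = c^T s + \frac 1 2 s^T (Q + \sigma \norm s I) s - \frac 1 6 \sigma \norm s^3$, and specialize it at a stationary point $s$ with multiplier $\lambda = \sigma\norm s$. Rewriting~\eqref{stat1} as $(Q + \lambda I) s = -c$ and multiplying on the left by $s^T$ gives $s^T (Q + \sigma \norm s I) s = -c^T s$, so that $m(s) = c^T s - \frac 1 2 c^T s - \frac 1 6 \sigma \norm s^3 = \frac 1 2 c^T s - \frac 1 6 \sigma \norm s^3$. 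Applying this to both $\hat s$ and $\bar s$ and using $\norm{\hat s} = \norm{\bar s}$, the claim $m(\hat s) = m(\bar s)$ is equivalent to $c^T \hat s = c^T \bar s$.

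The remaining step, which is the only place any real argument is needed, is to show $c^T(\hat s - \bar s) = 0$. Since both points satisfy~\eqref{stat1} with the same $\lambda$, subtracting the two instances of $(Q + \lambda I) s = -c$ yields $(Q + \lambda I)(\hat s - \bar s) = 0$. Then, using $c = -(Q + \lambda I)\hat s$ together with the symmetry of $Q + \lambda I$, I get $c^T(\hat s - \bar s) = -\hat s^T (Q + \lambda I)(\hat s - \bar s) = 0$, as desired.

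I do not anticipate a genuine obstacle here: the whole argument is a short manipulation of the stationarity equations and the identity~\eqref{proof:m(x)}. The only point worth stating carefully is that $\hat s - \bar s$ lies in the kernel of $Q + \lambda I$ and that this kernel is $c$-orthogonal, which is what makes the linear term in $m$ agree at the two points; everything else is substitution.
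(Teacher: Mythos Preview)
Your proposal is correct and follows essentially the same approach as the paper: both reduce $m(s)$ at a stationary point to $\tfrac12 c^T s - \tfrac16 \sigma\norm{s}^3$ and then show $c^T\hat s = c^T\bar s$ using the symmetry of $Q+\lambda I$ together with the two stationarity equations. The only cosmetic difference is that the paper writes this last step as the one-line chain $c^T\hat s = -\bar s^T(Q+\lambda I)\hat s = c^T\bar s$ rather than passing through the kernel condition $(Q+\lambda I)(\hat s-\bar s)=0$.
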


\begin{proof}
For every pair $(s,\lambda)$ satisfying~\eqref{stat1}--\eqref{stat2}, we can write
\[
\begin{split}
m(s) & = c^T s + \frac 1 2 s^T (-c - \lambda s) + \frac 1 3 \sigma \|s\|^3 \\
     & = \frac 1 2 c^T s - \frac 1 2 \lambda \|s\|^2 + \frac 1 3 \sigma \|s\|^3 = \frac 1 2 c^T s - \frac 1 6 \sigma \|s\|^3.
\end{split}
\]
Then,
\[
\begin{split}
m(\hat s) & = \frac 1 2 c^T \hat s - \frac 1 6 \sigma \|\hat s\|^3 = -\frac 1 2 \bar s^T (Q + \lambda I) \hat s - \frac 1 6 \sigma \|\bar s\|^3 \\
          & = \frac 1 2 c^T \bar s - \frac 1 6 \sigma \|\bar s\|^3 = m(\bar s).
\end{split}
\]
\end{proof}

The following proposition establishes a bound on the maximum number of stationary points with different norm.
The proof follows the same line of arguments used in~\cite{cartis:2011a} to characterize global minimizers of the cubic model.
It is entirely reported here for the sake of completeness.
\begin{proposition}\label{prop:dist_stat}
At most $2(k+1)$ points that satisfy~\eqref{stat1}--\eqref{stat2} with distinct values of $\lambda$ exist,
where $k$ is the number of distinct negative eigenvalues of $Q$.
\end{proposition}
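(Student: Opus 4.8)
The plan is to use the eigendecomposition of the symmetric matrix $Q$ and reduce the stationarity conditions to a single one-dimensional equation in the scalar $\lambda = \sigma\norm{s}$, then count its roots. Write $Q = U\Lambda U^T$ with $U$ orthogonal and $\Lambda = \diag(\mu_1,\dots,\mu_n)$, and set $g = U^T c$. From \eqref{stat1}, if $(Q+\lambda I)$ is nonsingular we have $s = -(Q+\lambda I)^{-1}c$, so $\norm{s}^2 = \sum_{i=1}^n g_i^2/(\mu_i+\lambda)^2$, and the constraint \eqref{stat2} becomes the secular equation $\psi(\lambda) := \sum_{i=1}^n g_i^2/(\mu_i+\lambda)^2 = \lambda^2/\sigma^2$. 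The idea is to count, on each open interval of the real line determined by the poles $\{-\mu_i\}$, how many solutions this equation can have, paying particular attention to the intervals where $\lambda \ge 0$ (since $\lambda = \sigma\norm{s} \ge 0$ is required) and to the degenerate situations where some $g_i = 0$ or $(Q+\lambda I)$ is singular.

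First I would handle the generic case where all $g_i \ne 0$. On any interval between consecutive poles, $\psi$ is a smooth convex-like function blowing up to $+\infty$ at both endpoints (or at one endpoint and tending to $0$ at $\pm\infty$ on the two unbounded intervals), while $\lambda^2/\sigma^2$ is a fixed upward parabola; a monotonicity/derivative argument shows that $\psi(\lambda) - \lambda^2/\sigma^2$ changes sign at most twice on each interval, hence at most two roots per interval. Next I would restrict to $\lambda \ge 0$: the relevant poles are those $-\mu_i$ with $\mu_i < 0$, i.e. the negative eigenvalues of $Q$, of which there are $k$ distinct values; these $k$ poles partition $[0,\infty)$ into at most $k+1$ subintervals, giving at most $2(k+1)$ admissible roots $\lambda$, and each such $\lambda$ determines $s$ uniquely via $s=-(Q+\lambda I)^{-1}c$. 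Then I would deal with the boundary/degenerate cases: $\lambda$ coinciding with some $-\mu_i$ (so $(Q+\lambda I)$ singular), which forces the corresponding $g_i = 0$ and yields a parametrized family of $s$, but all with the same $\lambda$ — and by Lemma \ref{lemma:stat} these contribute the same objective value, so they count as a single $\lambda$; likewise $\lambda = 0$, i.e. $s = 0$, which can occur only when $c = 0$.

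The main obstacle I expect is the careful bookkeeping at the degenerate boundaries: when $g_i = 0$ for some indices, the pole at $-\mu_i$ disappears from $\psi$ and one must re-examine which intervals actually carry roots and whether a singular $(Q+\lambda I)$ introduces extra stationary points beyond those counted by the secular equation. The key point to argue cleanly is that any stationary point with $(Q+\lambda I)$ singular still pins down $\lambda$ to one of the $k$ negative-eigenvalue values (or to $0$), so it cannot increase the count of \emph{distinct} $\lambda$ values — and since the statement is about distinct $\lambda$, not distinct $s$, the bound $2(k+1)$ survives. The rest is the routine verification that on each of the $k+1$ subintervals of $[0,\infty)$ the scalar equation $\psi(\lambda) = \lambda^2/\sigma^2$ has at most two solutions, which follows by examining the sign of the derivative of $\psi(\lambda)-\lambda^2/\sigma^2$ or by a convexity argument on a suitable reformulation.
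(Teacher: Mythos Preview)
Your plan is essentially the paper's proof: diagonalize $Q$, reduce \eqref{stat1}--\eqref{stat2} to a scalar secular equation in $\lambda$, and count its roots on the $k+1$ subintervals of $(0,\infty)$ cut out by the $k$ distinct negative eigenvalues, then account for $\lambda=0$. The one place to make your ``suitable reformulation'' explicit is the two-roots-per-interval claim: since both $\psi(\lambda)$ and $\lambda^2/\sigma^2$ are convex, their difference is not obviously unimodal on each interval, and the paper's device is precisely to divide through by $\lambda^2$, set $g(\lambda):=\psi(\lambda)/\lambda^2$, and verify by direct computation that $g''(\lambda)>0$ everywhere $g$ is defined, so that the equation $g(\lambda)=1/\sigma^2$ meets a horizontal line at most twice on each interval of continuity.
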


\begin{proof}
First, we observe that if $\lambda = 0$, then $s = 0$ is the only point that satisfies~\eqref{stat1}--\eqref{stat2}.
So, in the following we consider the case in which $\lambda > 0$ (i.e., $s \ne 0$).
Let $V \in \R^{n \times n}$ be an orthonormal matrix such that
\[
V^T Q V = M,
\]
where $M :=  \text{diag}_{i = 1,\dots,n}\{\mu_i\}$ and $\mu_1 \le \ldots \le \mu_n$ are the eigenvalues of $Q$.
Now, we can introduce the vector $a \in \R^n$ and consider the transformation
\[
s = V a.
\]
Pre-multiplying~\eqref{stat1} by $V^T$, we get
\[
V^T(Q + \lambda I)s = -V^T c,
\]
and then
\[
(M + \lambda I)a = - \beta,
\]
where $\beta = -V^T c$.

\noindent
The above expression can be equivalently written as
\begin{equation}\label{proof:alpha}
a_i = - \frac{\beta_i}{\mu_i + \lambda}, \quad i = 1,\dots,n.
\end{equation}
Moreover, from~\eqref{stat2} we get
\begin{equation}\label{proof:lambda}
\lambda^2 = \sigma^2 \norm s^2 = \sigma^2 \norm{V a}^2 = \sigma^2 \norm a^2.
\end{equation}
Using~\eqref{proof:alpha} and~\eqref{proof:lambda}, we can rewrite the stationary conditions as follows:
\begin{equation}\label{stat_system}
\begin{cases}
g(\lambda) = \dfrac{1}{\sigma^2}, \\
\lambda > 0,
\end{cases}
\end{equation}
where
\[
g(\lambda) := \frac{1}{\lambda^2} \sum_{i=1}^n \frac{\beta_i^2}{(\mu_i + \lambda)^2}.
\]
Now, we have two cases.
\begin{enumerate}[(i)]
\item $\beta_i = 0$ for all $i = 1,\ldots,n$ (i. e., $c=0$). It follows that $g(\lambda) = 0$ in all the domain and
system~\eqref{stat_system} does not admit solutions. In this case, only $s = 0$ satisfies stationary conditions~\eqref{stat1}--\eqref{stat2}.
\item An index $i \in \{1,\ldots,n\}$ exists such that $\beta_i \ne 0$ (i. e., $c \ne 0$).
Without loss of generality, we assume that $\mu_1,\ldots,\mu_p \le 0$, with $p \le n$.
Then $g(\lambda)$ is defined in the following $n+2$ subintervals:
\[
\begin{split}
& (-\infty, -\mu_n) \; \cup \; (-\mu_n ,-\mu_{n-1}) \; \cup \; \ldots \; \cup \; (-\mu_{p+1}, 0) \; \cup \\
& (0, -\mu_p) \; \cup \; \ldots \; \cup \; (-\mu_2 ,-\mu_1) \; \cup \; (-\mu_1, +\infty).
\end{split}
\]
\noindent
Computing the derivatives of $g(\lambda)$, we obtain
\begin{gather*}
\frac{d}{d \lambda} g(\lambda) = - 2 \sum_{i=1}^n \beta_i^2 [\lambda (\mu_i + \lambda)]^{-3} (\mu_i + 2\lambda), \\
\frac{d^2}{d \lambda^2} g(\lambda) = 2 \sum_{i=1}^n \beta_i^2 [\lambda (\mu_i + \lambda)]^{-4} \bigl[10 \lambda^2 + 10 \mu_i \lambda + 3 \mu_i^2].
\end{gather*}
It is straightforward to verify that $\dfrac{d^2}{d \lambda^2} g(\lambda) > 0$ in all the points where $g(\lambda)$ is defined, that is,
$g(\lambda)$ is strictly convex in all the non-empty subintervals that define its domain.

\noindent
Taking into account that $\displaystyle{\lim_{\lambda \to 0} g(\lambda) = +\infty}$,
$\displaystyle{\lim_{\lambda \to -\mu_i} g(\lambda) = +\infty}$ for all $\beta_i \ne 0$
and $\displaystyle{\lim_{\lambda \to \pm \infty} g(\lambda) = 0}$,
we get that $g(\lambda)$ has at most $2(n+1)$ roots: at most one in each extreme subinterval and at most two in all the other subintervals.

\noindent
Now, let $k \le p$ be the number of distinct negative eigenvalues $\mu_i$.
It follows that system~\eqref{stat_system} has at most $2k+1$ solutions:
at most two in each subinterval $(0, -\mu_k)$, $(-\mu_k, -\mu_{k-1})$, $\dots$, $(-\mu_2 ,-\mu_1)$, and at most one in the subinterval $(-\mu_1,+\infty)$.
Taking into account the case $\lambda = 0$, we conclude that there exist at most $2(k+1)$ distinct values of $\lambda$ satisfying stationary conditions~\eqref{stat1}--\eqref{stat2}.
\end{enumerate}
\end{proof}

From Lemma~\ref{lemma:stat} and Proposition~\ref{prop:dist_stat}, we easily get the following corollary,
establishing a bound on the maximum number of distinct values assumed by the objective function $m(s)$ at stationary points.
\begin{corollary}\label{cor:dist_stat_obj}
The maximum number of distinct values of the objective function $m(s)$ at stationary points is $2(k+1)$, where $k$ is the number of distinct negative eigenvalues of $Q$.
\end{corollary}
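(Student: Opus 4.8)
The plan is to combine the two results that immediately precede this statement. Lemma~\ref{lemma:stat} shows that the value of $m$ at a stationary point depends only on the associated multiplier $\lambda$, and Proposition~\ref{prop:dist_stat} bounds by $2(k+1)$ the number of admissible values of $\lambda$. Putting these together, the number of distinct values of $m$ over the set of stationary points cannot exceed $2(k+1)$.

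Concretely, first I would observe that to every stationary point $\bar s$ there is associated, via~\eqref{stat2}, a well-defined multiplier $\lambda = \sigma\norm{\bar s}$ (including the degenerate case $\bar s = 0$, for which $\lambda = 0$). By Lemma~\ref{lemma:stat}, any two stationary points sharing the same $\lambda$ yield the same objective value, so $m$ restricted to the stationary set factors through the assignment $\bar s \mapsto \lambda$: there is a well-defined function $\varphi$ on the set of admissible multipliers with $m(\bar s) = \varphi(\lambda)$ for every stationary $\bar s$. Hence the cardinality of $\{\, m(\bar s) : \bar s \text{ stationary}\,\}$ is at most the cardinality of the set of admissible multipliers.

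Then I would invoke Proposition~\ref{prop:dist_stat}, which asserts that this latter set contains at most $2(k+1)$ elements, where $k$ is the number of distinct negative eigenvalues of $Q$. Chaining the two bounds gives the claimed estimate.

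I do not anticipate a genuine obstacle: the only point needing a little care is to state the factoring-through step cleanly, so that Lemma~\ref{lemma:stat} applies uniformly to all stationary points (the $\bar s = 0$ case included). Once that is in place, the corollary follows at once.
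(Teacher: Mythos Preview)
Your proposal is correct and matches the paper's approach exactly: the paper simply states that the corollary follows from Lemma~\ref{lemma:stat} and Proposition~\ref{prop:dist_stat}, without giving further detail. Your factoring-through argument is precisely the intended chaining of these two results.
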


At least from a theoretical point of view,
Theorem~\ref{th:global} and Corollary~\ref{cor:dist_stat_obj} suggest a possible iterative strategy to obtain a global minimizer of problem~\eqref{cubic_model}.
Namely, we can compute a stationary point $\bar s$ by some local algorithm and check the conditions of Theorem~\ref{th:global}:
if none of them is satisfied, then $\bar s$ is a global minimizer (see Remark~\ref{rem:global}); otherwise, we get a new point $\hat s$ such that $m(\hat s) < m(\bar s)$
and, starting from $\hat s$, we can compute a new stationary point and iterate.
Corollary~\ref{cor:dist_stat_obj} ensures that this procedure is finite and returns a global minimizer of problem~\eqref{cubic_model}.

To be rigorous, the above strategy is well defined under the assumption that stationary points can be computed in a finite number of iterations by a local algorithm.
Unfortunately, optimization methods only ensure asymptotic convergence and, in practice,
a point $\bar s$ is returned such that $\norm{\nabla m(\bar s)} \le \epsilon$, being $\epsilon$ a desired tolerance.
In the next section, we show how Theorem~\ref{th:global} can be generalized to cope with this case and discuss possible algorithmic applications.

\section{Extension to approximate stationary points}\label{sec:practical_usage}
In this section, first we extend Theorem~\ref{th:global} to the case where stationary conditions are approximately satisfied,
and then we briefly discuss how these results may be used in an algorithmic framework, showing some numerical examples.

Assuming that $\bar s \in \R^n$ is a non-stationary point of problem~\eqref{cubic_model}, of course we have $\norm{\nabla m(\bar s)} > 0$, or equivalently,
$\abs{\nabla m(\bar s)^T d} > 0$ for some $d \in \R^n$.
The next theorem states some conditions to compute a point $\hat s$ such that $m(\hat s) < m(\bar s)$.

\begin{theorem}\label{th:global_approx}
Given $\bar s \in \R^n$, let us define the point $\hat s$ as follows:
\begin{enumerate}[(a)]
\item if $c^T \bar s > 0$, then
    \[
    \hat s := -\bar s;
    \]
\item if $c^T \bar s \le 0$ and a vector $d \in \R^n$ exists such that $d^T(Q + \sigma \norm{\bar s} I) d < -\epsilon_2 \norm d^2$,
    \begin{enumerate}[(i)]
    \item if $\bar s = 0$  and $\epsilon_2 \ge 0$, then, assuming without loss of generality that $c^T d \le 0$,
        \[
        \hat s := \bar s + \alpha d,
        \]
        with $\displaystyle{0 < \alpha < -\frac{3 \, d^T Q d}{2 \, \sigma \norm d^3}}$;
    \item if $\bar s \ne 0$, $\bar s^T d \ne 0$ and $\epsilon_2 \ge \biggl|\dfrac{\nabla m(\bar s)^T d}{\bar s^T d}\biggr|$, then
        \[
    	\hat s := \bar s - 2 \frac{\bar s^T d}{\norm d^2} d;
    	\]
    \item if $\bar s \ne 0$, $\bar s^T d = 0$ and $\epsilon_2 > \dfrac{\abs{\nabla m(\bar s)^T \bar s}}{\norm{\bar s}^2}$, then,
        assuming without loss of generality that $\nabla m(\bar s)^T d \ge 0$,
        \[
        \hat s := \bar s - 2 \frac{\bar s^T z}{\norm z^2} z,
        \]
    	where $z:= \bar s + \alpha d$ and $\alpha > 0$ is sufficiently large to satisfy
        \[
        z^T(Q + \sigma \norm{\bar s} I)z < -\epsilon_2 \norm z^2.
        \]
    \end{enumerate}
\end{enumerate}
We have that
\[
m(\hat s) < m(\bar s).
\]
\end{theorem}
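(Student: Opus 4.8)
The plan is to mirror the proof of Theorem~\ref{th:global}, replacing every use of stationarity (there $\nabla m(\bar s)=0$) by explicit control of the residual terms $\nabla m(\bar s)^T d$ and $\nabla m(\bar s)^T\bar s$. Case~(a) requires no change at all: the chain $m(-\bar s)=-c^T\bar s+\frac 1 2\bar s^TQ\bar s+\frac 1 3\sigma\norm{\bar s}^3<m(\bar s)$ never used stationarity, so it applies verbatim. For case~(b)(i), since $\bar s=0$ the hypothesis reads $d^TQd<-\epsilon_2\norm d^2\le 0$; replacing $d$ by $-d$ if necessary (which preserves this inequality) lets me assume $c^Td\le 0$, and then I would expand $m(\alpha d)=\alpha\bigl(c^Td+\frac 1 6\alpha(3d^TQd+2\sigma\alpha\norm d^3)\bigr)$ and note that for $0<\alpha<-3d^TQd/(2\sigma\norm d^3)$ the bracketed expression is strictly negative, hence $m(\hat s)<0=m(\bar s)$.

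For case~(b)(ii) the key observation is that identity~\eqref{proof:case_b_2} was derived in the proof of Theorem~\ref{th:global} from~\eqref{proof:norm_hat_x}, \eqref{proof:m(x)} and the plain formula $\nabla m(\bar s)=Q\bar s+\sigma\norm{\bar s}\bar s+c$ \emph{without} invoking stationarity, so I may reuse it keeping the residual term:
\[
m(\hat s)=m(\bar s)+\frac 1 2\Bigl(2\tfrac{\bar s^Td}{\norm d^2}\Bigr)^2 d^T(Q+\sigma\norm{\bar s}I)d-2\tfrac{\bar s^Td}{\norm d^2}\nabla m(\bar s)^Td.
\]
Writing $t:=2\bar s^Td/\norm d^2\ne 0$, I would bound $\frac 1 2 t^2\,d^T(Q+\sigma\norm{\bar s}I)d<-\frac 1 2\epsilon_2 t^2\norm d^2=-2\epsilon_2(\bar s^Td)^2/\norm d^2\le-2\abs{\bar s^Td}\,\abs{\nabla m(\bar s)^Td}/\norm d^2$ using $\epsilon_2\ge\abs{\nabla m(\bar s)^Td/(\bar s^Td)}$, while $-t\nabla m(\bar s)^Td\le 2\abs{\bar s^Td}\,\abs{\nabla m(\bar s)^Td}/\norm d^2$; the two bounds add up to a strictly negative quantity, giving $m(\hat s)<m(\bar s)$.

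Case~(b)(iii) is the main obstacle. After reducing WLOG to $\nabla m(\bar s)^Td\ge 0$ (replacing $d$ by $-d$ keeps $\bar s^Td=0$ and the curvature bound), I must first check that an admissible $\alpha>0$ exists: since $\bar s^Td=0$, the expression $z^T(Q+\sigma\norm{\bar s}I)z+\epsilon_2\norm z^2$ is a quadratic in $\alpha$ whose leading coefficient $d^T(Q+\sigma\norm{\bar s}I)d+\epsilon_2\norm d^2$ is negative, hence it is negative for all large $\alpha$. Then $\bar s^Tz=\norm{\bar s}^2\ne 0$, so identity~\eqref{proof:case_b_2} applies with $z$ in place of $d$, and the residual term equals $r(\frac 1 2 rB-h)$ with $r:=2\norm{\bar s}^2/\norm z^2>0$, $B:=z^T(Q+\sigma\norm{\bar s}I)z$, $h:=\nabla m(\bar s)^Tz$.

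The delicate point — and the reason the naive reduction to case~(ii) fails, since $\abs{\nabla m(\bar s)^Tz/(\bar s^Tz)}$ may grow without bound as $\alpha\to\infty$ — is that the $\alpha^2$-growth of $B$ is damped by the division by $\norm z^2$: indeed $\frac 1 2 rB=\norm{\bar s}^2 B/\norm z^2<-\epsilon_2\norm{\bar s}^2$ by the choice of $\alpha$, whereas $-h=-\nabla m(\bar s)^T\bar s-\alpha\nabla m(\bar s)^Td\le\abs{\nabla m(\bar s)^T\bar s}$ thanks to the sign normalization $\nabla m(\bar s)^Td\ge 0$. Hence $\frac 1 2 rB-h<-\epsilon_2\norm{\bar s}^2+\abs{\nabla m(\bar s)^T\bar s}<0$ by the hypothesis $\epsilon_2>\abs{\nabla m(\bar s)^T\bar s}/\norm{\bar s}^2$, so the residual term is strictly negative and $m(\hat s)<m(\bar s)$, completing the proof.
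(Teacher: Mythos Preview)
Your proof is correct and follows the same overall strategy as the paper: reuse identity~\eqref{proof:case_b_2} (which does not require stationarity) and control the new residual term $\nabla m(\bar s)^T d$ or $\nabla m(\bar s)^T z$ by means of the $\epsilon_2$ hypotheses. Cases~(a), (b)(i) and (b)(ii) match the paper almost line for line.

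In case~(b)(iii) your argument is actually tighter than the paper's. The paper introduces an auxiliary $\theta\in(0,1)$ with $\theta\epsilon_2=\abs{\nabla m(\bar s)^T\bar s}/\norm{\bar s}^2$ and then argues, via a limit as $\alpha\to\infty$, that eventually $z^T(Q+\sigma\norm{\bar s}I)z/\norm z^2<-\theta\epsilon_2$, which is precisely what is needed to make the bracket in~\eqref{proof:ineq} vanish. You instead use the displayed condition $z^T(Q+\sigma\norm{\bar s}I)z<-\epsilon_2\norm z^2$ directly: dividing by $\norm z^2$ and multiplying by $\norm{\bar s}^2$ gives $\tfrac12 rB<-\epsilon_2\norm{\bar s}^2$, and combining with $-h\le\abs{\nabla m(\bar s)^T\bar s}$ and the strict inequality $\epsilon_2\norm{\bar s}^2>\abs{\nabla m(\bar s)^T\bar s}$ finishes at once. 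This avoids the $\theta$ device and shows that \emph{every} $\alpha>0$ satisfying the displayed condition works, not merely all sufficiently large ones; the paper's limiting argument is then only needed (as you also note) to guarantee that at least one such $\alpha$ exists.
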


\begin{proof}
The proof of case~(a) is the same as for Theorem~\ref{th:global}.
Now, we consider case~(b) and distinguish the three subcases.
\begin{enumerate}[(i)]
\item Since we are assuming that $c^T d \le 0$, we can write
    \[
    \begin{split}
    m(\bar s + \alpha d) & = m(\alpha d) = \alpha c^T d + \frac 1 2 \alpha^2 d^T Q d + \frac 1 3 \sigma \alpha^3 \norm d^3 \\
                         & \le \frac 1 2 \alpha^2 d^T Q d + \frac 1 3 \sigma \alpha^3 \norm d^3
    \end{split}
    \]
    and we obtain the result by the same arguments used in the proof of point~(b)-(i) of Theorem~\ref{th:global}.
\item Using~\eqref{proof:case_b_2}, and exploiting the fact that $d^T(Q + \sigma \norm{\bar s} I) d < -\epsilon_2 \norm d^2$, we get
     \[
     \begin{split}
     m\biggl(\bar s - 2 \frac{\bar s^T d}{\norm d^2} d\biggr)
     & < m(\bar s) - \frac 1 2 \biggl( 2 \frac{\bar s^T d}{\norm d^2} \biggr)^2 \epsilon_2 \norm d^2 - 2 \frac{\bar s^T d}{\norm d^2} \nabla m(\bar s)^T d \\
     & \le m(\bar s) - \frac 1 2 \biggl( 2 \frac{\bar s^T d}{\norm d^2} \biggr)^2 \epsilon_2 \norm d^2 + 2 \frac{\abs{\bar s^T d}}{\norm d^2} \abs{\nabla m(\bar s)^T d} \\
     & = m(\bar s) - 2 \frac{\abs{\bar s^T d}}{\norm d^2} \Bigl( \abs{\bar s^T d} \epsilon_2 - \abs{\nabla m(\bar s)^T d} \Bigr) \le m(\bar s),
     \end{split}
     \]
     where the last inequality follows from the fact that $\epsilon_2 \ge \biggl|\dfrac{\nabla m(\bar s)^T d}{\bar s^T d}\biggr|$.
\item Since $d \ne 0$, we can first assume that $\alpha > 0$ is sufficiently large to satisfy $z \ne 0$.
    Replacing $d$ with $z$ in~\eqref{proof:case_b_2}, we obtain
    \[
    m\biggl(\bar s - 2 \frac{\bar s^T z}{\norm z^2} z\biggr) =
    m(\bar s) + \frac 1 2 \biggl( 2 \frac{\bar s^T z}{\norm z^2} \biggr)^2 z^T (Q + \sigma \norm{\bar s} I) z
    - 2 \frac{\bar s^T z}{\norm z^2} \nabla m(\bar s)^T z.
    \]
    Taking into account that $z = \bar s + \alpha d$ and $\bar s^T z = \bar s^T (\bar s + \alpha d) = \norm{\bar s}^2$,
    we can write
    \begin{equation}\label{proof:ineq}
    \begin{split}
    m\biggl(\bar s - 2 \frac{\bar s^T z}{\norm z^2} z\biggr) & =
    m(\bar s) + \frac 1 2 \biggl( 2 \frac{\norm{\bar s}^2}{\norm z^2} \biggr)^2 z^T (Q + \sigma \norm{\bar s} I) z
          - 2 \frac{\norm{\bar s}^2}{\norm z^2} \nabla m(\bar s)^T (\bar s + \alpha d) \\
    & \le m(\bar s) + \frac 1 2 \biggl( 2 \frac{\norm{\bar s}^2}{\norm z^2} \biggr)^2 z^T (Q + \sigma \norm{\bar s} I) z
          - 2 \frac{\norm{\bar s}^2}{\norm z^2} \nabla m(\bar s)^T \bar s \\
    & \le m(\bar s) + 2 \frac{\norm{\bar s}^2}{\norm z^2} \biggl(\norm{\bar s}^2 \, \frac{z^T (Q + \sigma \norm{\bar s} I) z}{\norm z^2}
          + \abs{\nabla m(\bar s)^T \bar s}\biggr),
    \end{split}
    \end{equation}
    where the first inequality follows from the fact that $\nabla m(\bar s)^T d \ge 0$ and $\alpha > 0$.
    Now, let us define $\theta \in (0,1)$ such that $\epsilon_2 = \dfrac 1 {\theta} \dfrac{\abs{\nabla m(\bar s)^T \bar s}}{\norm{\bar s}^2}$.
    Exploiting the fact that $\theta \in (0,1)$ and $d^T(Q + \sigma \norm{\bar s} I) d < -\epsilon_2 \norm d^2$, for sufficiently large $\alpha > 0$ we have
    \[
    \frac{\biggl(\dfrac{\bar s}{\alpha}+d\biggr)^T (Q + \sigma \norm{\bar s} I) \biggl(\dfrac{\bar s}{\alpha}+d\biggr)}{\dfrac{\norm{\bar s}^2}{\alpha^2} + \norm d^2}
    = \frac{(\bar s + \alpha d)^T (Q + \sigma \norm{\bar s} I) (\bar s + \alpha d)}{\norm{\bar s}^2 + \alpha^2 \norm d^2} < -\theta \epsilon_2.
    \]
    Taking into account that $z = \bar s + \alpha d$ and $\norm z^2 = \norm{\bar s}^2 + \alpha^2 \norm d^2$,
    it follows that, for sufficiently large $\alpha>0$,
    \[
    \begin{split}
    \frac{z^T (Q + \sigma \norm{\bar s} I) z}{\norm z^2} < -\theta \epsilon_2.
    \end{split}
    \]
    Combining this inequality with~\eqref{proof:ineq}, for sufficiently large $\alpha > 0$ we can write
    \[
    m\biggl(\bar s - 2 \frac{\bar s^T z}{\norm z^2} z\biggr) <
    m(\bar s) + 2 \frac{\norm{\bar s}^2}{\norm z^2} \Bigl(-\theta \epsilon_2 \norm{\bar s}^2 + \abs{\nabla m(\bar s)^T \bar s}\Bigr) = m(\bar s),
    \]
    where the equality follows from the fact that $\epsilon_2 = \dfrac 1 {\theta} \dfrac{\abs{\nabla m(\bar s)^T \bar s}}{\norm{\bar s}^2}$.
\end{enumerate}
\end{proof}

\begin{remark}
It is straightforward to verify that, when $\bar s$ is a stationary point,
Theorem~\ref{th:global_approx} coincides with Theorem~\ref{th:global}.
\end{remark}

\begin{remark}
Using~\eqref{proof:case_b_2}, Theorem~\ref{th:global_approx} can be strengthened by replacing the condition~b-(ii) with the condition that
a direction $d$ exists such that $\bar s \ne 0$, $\bar s^T d \ne 0$ and
\[
\frac 1 2 \biggl( 2 \frac{\bar s^T d}{\norm d^2} \biggr)^2 d^T (Q + \sigma \norm{\bar s} I) d - 2 \frac{\bar s^T d}{\norm d^2} \nabla m(\bar s)^T d < 0.
\]
\end{remark}

\begin{remark}
From a computational point of view, condition~(a) of Theorem~\ref{th:global_approx} can be easily checked with a negligible cost.
To check condition~(b), we have to verify if there exists a negative curvature direction with respect to the matrix $(Q + \sigma \norm{\bar s} I)$.
This can be done, for example, by calculating the smallest eigenvalue and the associate eigenvector of that matrix.
If such a direction exists, we see that, for case~(b)-(i), this is enough to ensure that $m(\hat s) < m(\bar s)$.
For case~(b)-(ii) and (b)-(iii), we have to check if $\epsilon_2$ is sufficiently large.
It is easy to verify that, if $\norm{\nabla m(\bar s)} \le \epsilon$, then
condition \mbox{(b)-(ii)} is verified whenever $\epsilon_2 \ge \epsilon \norm d/\abs{\bar s^T d}$,
and condition~(b)-(iii) is verified whenever $\epsilon_2 > \epsilon/\norm{\bar s}$.
Therefore, the threshold value of $\epsilon_2$ for satisfying conditions b-(ii) and \mbox{b-(iii)}
is related to $\norm{\nabla m(\bar s)}$, that is, the tolerance we have chosen to solve problem~\eqref{cubic_model}.
\end{remark}

Let us concluding this section by discussing some possible algorithmic applications of our results,
even if defining a proper optimization method is beyond the scope of the paper.
A first naive strategy to exploit Theorem~\ref{th:global_approx} is checking if one of its conditions holds
after that an approximate stationary point $\bar s$ of problem~\eqref{cubic_model} is computed with the desired tolerance by a local algorithm.
If this is the case, then we can compute the point $\hat s$ and restart the local algorithm from $\hat s$.
To provide some numerical examples, we have inserted this strategy within the ARC algorithm described in~\cite{cartis:2011a,cartis:2011b}
to minimize the cubic model at each iteration, giving rise to an algorithm that we name ARC$^+$.
In particular, at every iteration of ARC$^+$ and ARC, a truncated-Newton method has been used as local solver for the minimization
of the cubic model, starting from a randomly chosen point. The codes have been written in Matlab, using built-in functions to compute eigenvalues and eigenvectors
needed to check the conditions of Theorem~\ref{th:global_approx}.
We have considered a set of $130$ unconstrained test problems of the form $\min_{x \in \R^n} \, f(x)$ from the CUTEst collection~\cite{gould:2015} and,
among them, we have then selected the $39$ for which the two algorithms performed differently and both converged to a point $x^*$ such that
$\norm{\nabla f(x^*)}_{\infty} \le 10^{-5}$ within a maximum number of iterations, set equal to $10^5$.
The results on this subset of problems are reported in Table~\ref{tab:results}, where \textit{obj} and \textit{iter} denote the final objective value
and the number of iterations, respectively.
We see that, in $28$ out $39$ cases, ARC$^+$ converged in fewer iterations.
Taking a look to the performance profile~\cite{dolan:2002} reported in Figure~\ref{fig:performance_profile}, we also observe that, on the considered subset of problems,
ARC$^+$ is more robust than ARC in terms of number of iterations.
We have then repeated the same experiments by using the Cauchy point as starting point for the minimization of the cubic model,
but no significative difference emerged between ARC$^+$ and ARC. This opens a question about possible relations between the Cauchy point
and the global minimizers, which can be subject of future research.

\begin{table}
\centering
\caption{Numerical results of ARC$^+$ and ARC on CUTEst problems. ARC$^+$ differs from ARC in that a globalization strategy,
outlined in Theorem~\ref{th:global_approx}, is used to minimize the cubic model at each iteration. For each problem, the smallest number of iterations is highlighted in bold.}
\bigskip\par
{\begin{tabular}{c c | c c | c c}
\hline
\multirow{2}*{Problem} & \multirow{2}*{$n$} & \multicolumn{2}{c|}{ARC$^+$} & \multicolumn{2}{c}{ARC} \bigstrut[t] \\
                       &                    & obj & iter                   & obj & iter \bigstrut[b] \\
\hline
BROWNAL    & $200$ & $1.00$e$-07$ & $\textbf{103}$ & $1.00$e$-07$ & $472$ \bigstrut[t] \\
BROWNBS    & $2$ & $7.40$e$-12$ & $\textbf{27552}$ & $0.00$e$+00$ & $27560$ \\
CURLY10    & $100$ & $-1.00$e$+04$ & $\textbf{82}$ & $-1.00$e$+04$ & $281$ \\
CURLY20    & $100$ & $-1.00$e$+04$ & $\textbf{53}$ & $-1.00$e$+04$ & $288$ \\
CURLY30    & $100$ & $-1.00$e$+04$ & $\textbf{39}$ & $-1.00$e$+04$ & $590$ \\
DECONVU    & $63$ & $9.10$e$-07$ & $\textbf{162}$ & $8.52$e$-07$ & $167$ \\
DENSCHND   & $3$ & $2.63$e$-07$ & $\textbf{2154}$ & $2.82$e$-07$ & $2293$ \\
DIXMAANH   & $300$ & $1.00$e$+00$ & $424$ & $1.00$e$+00$ & $\textbf{423}$ \\
DIXMAANJ   & $300$ & $1.00$e$+00$ & $4762$ & $1.00$e$+00$ & $\textbf{4739}$ \\
DIXMAANK   & $300$ & $1.00$e$+00$ & $5335$ & $1.00$e$+00$ & $\textbf{5265}$ \\
DIXMAANL   & $300$ & $1.00$e$+00$ & $5008$ & $1.00$e$+00$ & $\textbf{4941}$ \\
EIGENCLS   & $462$ & $4.70$e$-09$ & $\textbf{254}$ & $4.37$e$-09$ & $258$ \\
ENGVAL2    & $3$ & $8.49$e$-16$ & $\textbf{30}$ & $2.04$e$-20$ & $50$ \\
FLETCHBV   & $10$ & $-2.04$e$+06$ & $551$ & $-2.09$e$+06$ & $\textbf{460}$ \\
GENHUMPS   & $10$ & $4.49$e$-12$ & $\textbf{8968}$ & $2.77$e$-11$ & $9283$ \\
GENROSE    & $100$ & $1.00$e$+00$ & $\textbf{119}$ & $1.00$e$+00$ & $120$ \\
GENROSEB   & $500$ & $1.00$e$+00$ & $\textbf{505}$ & $1.00$e$+00$ & $511$ \\
GROWTHLS   & $3$ & $1.00$e$+00$ & $\textbf{271}$ & $1.00$e$+00$ & $4557$ \\
GULF       & $3$ & $3.51$e$-06$ & $4642$ & $3.51$e$-06$ & $\textbf{4640}$ \\
HAIRY      & $2$ & $2.00$e$+01$ & $\textbf{108}$ & $2.00$e$+01$ & $158$ \\
HEART8LS   & $8$ & $4.91$e$-12$ & $\textbf{86}$ & $6.97$e$-17$ & $130$ \\
HUMPS      & $2$ & $1.91$e$-10$ & $\textbf{1611}$ & $8.40$e$-11$ & $1858$ \\
JENSMP     & $2$ & $1.24$e$+02$ & $\textbf{28}$ & $1.24$e$+02$ & $47$ \\
LIARWHD    & $100$ & $1.39$e$-19$ & $\textbf{12}$ & $2.97$e$-20$ & $14$ \\
LOGHAIRY   & $2$ & $1.82$e$-01$ & $\textbf{5177}$ & $1.82$e$-01$ & $5316$ \\
MEXHAT     & $2$ & $-4.00$e$-02$ & $523$ & $-4.00$e$-02$ & $\textbf{68}$ \\
NONCVXU2   & $100$ & $2.33$e$+02$ & $\textbf{571}$ & $2.33$e$+02$ & $572$ \\
NONDIA     & $100$ & $1.57$e$-18$ & $\textbf{7}$ & $9.66$e$-26$ & $9$ \\
OSCIPATH   & $10$ & $1.00$e$+00$ & $39$ & $1.00$e$+00$ & $\textbf{22}$ \\
PALMER6C   & $8$ & $1.64$e$-02$ & $21678$ & $1.64$e$-02$ & $\textbf{17418}$ \\
PALMER7C   & $8$ & $6.02$e$-01$ & $31863$ & $6.02$e$-01$ & $\textbf{24683}$ \\
PALMER8C   & $8$ & $1.60$e$-01$ & $33434$ & $1.60$e$-01$ & $\textbf{14945}$ \\
PARKCH     & $15$ & $1.62$e$+03$ & $\textbf{65}$ & $1.62$e$+03$ & $250$ \\
PFIT1LS    & $3$ & $2.10$e$-10$ & $\textbf{501}$ & $4.75$e$-04$ & $2810$ \\
SINEVAL    & $2$ & $2.13$e$-17$ & $\textbf{101}$ & $5.40$e$-12$ & $137$ \\
SPARSINE   & $100$ & $1.83$e$-14$ & $\textbf{38}$ & $1.13$e$-10$ & $39$ \\
SROSENBR   & $100$ & $4.02$e$-14$ & $\textbf{12}$ & $2.13$e$-17$ & $500$ \\
VARDIM     & $200$ & $6.90$e$-31$ & $\textbf{36}$ & $7.29$e$-27$ & $37$ \\
WATSON     & $12$ & $3.57$e$-06$ & $\textbf{82}$ & $2.84$e$-06$ & $91$ \bigstrut[b] \\
\hline
\end{tabular}}
\label{tab:results}
\end{table}

\begin{figure}
\centering
\includegraphics[scale=0.75]{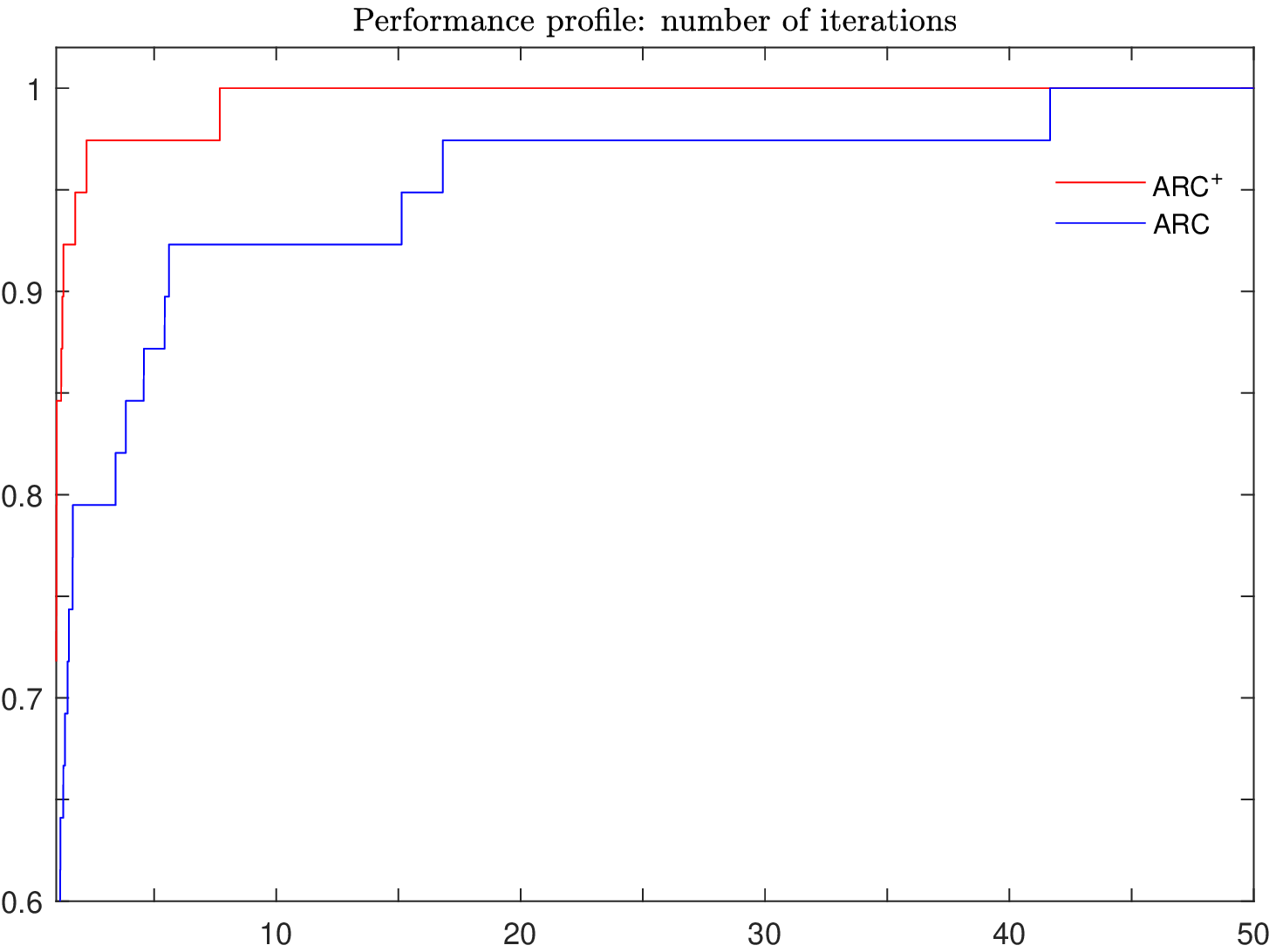}
\caption{Performance profile for the number of iterations related to the numerical experiments reported in Table~\ref{tab:results}.}
\label{fig:performance_profile}
\end{figure}

It is worth pointing out that the above described ARC$^+$ method could be too expensive in terms of CPU time,
since it requires the computation of eigenvalues and eigenvectors at the end of each local minimization.
Nevertheless, a more refined way to exploit Theorem~\ref{th:global_approx} for algorithmic purposes can be based on checking if
one of its conditions is satisfied during the iterations of the local method, instead of at the end.
This can be done efficiently when the local method is able to detect negative curvature directions.
Assuming that a sequence of points $\{s^k\}$ and a sequence of directions $\{d^k\}$ are produced by the local algorithm,
since $\nabla^2 m(s^k) = Q + \sigma \norm{s^k} I + \sigma \dfrac{s^k (s^k)^T}{\norm{s^k}}$, we have
$\displaystyle{(d^k)^T (Q + \sigma \norm{s^k} I) d^k = (d^k)^T \nabla^2 m(s^k) d^k - \sigma \frac{((s^k)^T d^k)^2}{\norm{s^k}}}$.
Therefore, if $d^k$ is a negative curvature direction with respect to $\nabla^2 m(s^k)$, condition~(b) of Theorem~\ref{th:global_approx}
is verified for some $\epsilon_2 \ge 0$, provided $c^T \bar s \le 0$. Then, a new point that ensures a decrease in the objective function may be easily computed.
In this case, condition~(b) of Theorem~\ref{th:global_approx} can therefore be checked without the need of computing eigenvalues and eigenvectors.
Finally, other checks can be included in the scheme to ensure convergence of such modification of the local algorithm.

\section{Conclusions}\label{sec:conclusions}
In this paper, we have highlighted some theoretical properties of the stationary points of problem~\eqref{cubic_model},
whose solutions are of interest for many optimization methods.
We have shown that, given a stationary point of problem~\eqref{cubic_model} that is not a global minimizer,
it is possible to compute, in closed form, a new point that reduces the objective function value.
Then, we have pointed out how a global minimum point
of problem~\eqref{cubic_model} can be obtained by computing at most $2(k+1)$ stationary points, where $k$ is the number
of distinct negative eigenvalues of the matrix~$Q$.
Further, we have extended these results to the case where stationary conditions are approximately satisfied,
sketching some possible algorithmic applications.

We think that the most natural extension of the results presented in this paper is the definition of a proper algorithm
for unconstrained optimization, based on the iterative computation of the solutions of problem~\eqref{cubic_model},
for which some preliminary ideas have been proposed at the end of Section~\ref{sec:practical_usage}.
This can be a challenging task for future research.


\bibliographystyle{spbasic}      
\bibliography{cubic}   

\end{document}